\documentclass{article}
\usepackage[T2A]{fontenc}
\usepackage[utf8]{inputenc}
\usepackage[english]{babel}
\usepackage[tbtags]{amsmath}
\usepackage{amsfonts,amssymb,mathrsfs,amscd}
\usepackage{graphicx}

\usepackage{float}

\numberwithin{equation}{section}
\newtheorem{theorem}{Theorem}
\newtheorem{lemma}{Lemma}[section]

\newtheorem{proof}{Proof}
\newtheorem{remark}{Remark}

\begin{document}

\begin{center}
{\bf Sidon-type inequalities for $p$-adic analogues of Rademacher chaos}
\end{center}

\begin{center}
A.~D.~Kazakova
\end{center}

Keywords: Sidon inequality, Rademacher chaos, $L^\infty$ space, Riesz-Rademacher product

\section{Introduction}

A system of functions $(\gamma_k)_{k \in \mathbb{N}}$ is called a \textit{Sidon system} if the following Sidon inequality holds:
\begin{equation} \label{Sidon}
\|(a_k)_{k=1}^N\|_{\ell_1} \le C \left\|\sum_{k=1}^N a_k \gamma_k\right\|_{L^\infty}, \qquad C>0 \; \text{is independent of } a_k \in \mathbb{C}.
\end{equation}
S. Sidon \cite{SidonTrig, Sidon} proved that Hadamard lacunary subsystems of the trigonometric or Walsh systems are Sidon systems. One of the subsequent directions in generalizing the Sidon inequality is related to its application to the estimation of entropy numbers and Kolmogorov widths \cite{KasTem98}, \cite{Rad22}.

Sidon's proof relies on the use of Riesz products, which have become an important tool in the theory of orthogonal series (see, e.g., \cite[Ch. 9, \S 4]{Kashin-Saakyn} and \cite[Ch. VII]{Blei}). A classical example of a Sidon system is the Rademacher system \cite{Ast-book}, which forms a Hadamard lacunary subsystem of the Walsh system. In the space $L^\infty$, the system consisting of $d$-fold products of distinct Rademacher functions (the \textit{$d$-th order Rademacher chaos}) behaves quite differently: it is no longer a Sidon system, but nevertheless forms a $\frac{2d}{d+1}$-Sidon system \cite[Ch. VII]{Blei}. The latter means that inequality \eqref{Sidon} holds with the $\ell_1$-norm on the left-hand side replaced by the $\ell_{\frac{2d}{d+1}}$-norm, and the constant $\frac{2d}{d+1}$ cannot be improved.

Let $p \ge 2$ be fixed. The \textit{generalized Rademacher functions} on $[0,1)$ are defined by the formula
$$
R_k (x) := \exp\left(\frac{2 \pi i}{p} \lfloor x p^{k+1} \rfloor \right), \qquad k \in \mathbb{N}_0.
$$
Consider the system
\begin{equation} \label{chaos}
    A_d:= \{R_{k_1}^{l_1} \cdots R_{k_d}^{l_d} : k_1 < \ldots < k_d, \; l_1, \ldots, l_d \in \{1, \ldots, p-1\} \},
\end{equation}
which represents the most general extension of the $d$-th order Rademacher chaos to the $p$-ary case. The system of functions \eqref{chaos} is a subsystem of the Vilenkin--Сrestenson system (which generalizes the Walsh system from the case $p=2$), and under the Paley enumeration it corresponds to the indices:
\begin{equation} \label{n}
    N_d:= \{ n \in \mathbb{N} : n = l_1 p^{k_1} + \ldots + l_d p^{k_d}, \quad k_1 < \ldots < k_d, \; l_1, \ldots, l_d \in \{1, \ldots, p-1\} \}.
\end{equation}

According to a result by R. Blei \cite[Ch. VII, \S 12]{Blei}, the tetrahedral $p$-ary Rademacher chaos
\[
    \{R_{k_1} \cdots R_{k_d} : k_1 < \ldots < k_d\},
\]
which forms a subsystem of \eqref{chaos}, is a $\frac{2d}{d+1}$-Sidon system.

We show that the system \eqref{chaos} is also a $\frac{2d}{d+1}$-Sidon system.
The main tool of the proof is the application of the analogs of the Riss--Rademacher products, as well as the ideas from the paper \cite{KazPlo25}.

\section{Constructions of measures}
The \textit{Riesz--Rademacher product} is defined as a measure $\rho$ on the interval $[0,1]$ that arises as the weak-$^*$ limit of a sequence of measures with densities with respect to the Lebesgue measure:
\begin{equation} 
 \rho_n(x) = \prod_{k=1}^{n} \bigl(1 + a_k r_k(x)\bigr),
\end{equation}
where the coefficients $a_k \in \mathbb{R}$ satisfy $|a_k| \le 1$.

The Riesz--Rademacher product can be generalized to the case of generalized Rademacher functions as follows:
\begin{equation} \label{28042248}
    \rho = \operatorname{weak-}^*\lim_{n\rightarrow \infty} \rho_n, \qquad  \rho_n= \prod_{k=1}^{n} \left(1+\frac{1}{2}\Bigl[a_k R_k^{j_k}(x) + \overline{a_k R_k^{j_k}(x)}\Bigr]\right),
\end{equation}
where $\{j_k\}_{k=1}^{\infty}$ is a sequence taking values in $\{1, \dots, p-1\}$ and $a_k \in \mathbb{C}$ satisfy $|a_k| \le 1$. 

Expression \eqref{28042248} is well-defined, since
\begin{equation} \label{ravenstvo}
\|\rho_n\|_{L_1} = \int_0^1 |\rho_n(x)|\,\mathrm{d}\mu = \int_0^1 \rho_n(x)\,\mathrm{d}\mu =1,
\end{equation}
and by the Banach--Alaoglu theorem, there exists a positive Borel measure $\rho$ defined by \eqref{28042248} such that $\|\rho\| =1$, where $\|\cdot\|$ denotes the total variation of the measure.

\begin{lemma} \label{lemma-1}
    Let $J = (j_k)_{k=0}^{\infty}$ be a fixed sequence of integers from the set $\{1, \dots, p-1\}$. Then there exists a measure $\nu$ of bounded variation such that the following holds for its Fourier--Vilenkin--Crestenson coefficients:
\[
\begin{cases}
    \widehat{\nu}_n = 1, \quad \text{if $n\in N_d$ and $l_1 = j_{k_1}, \ldots, l_d = j_{k_d}$}, \\
    \widehat{\nu}_n = 0, \quad \text{if $n\in N_d$ and $\exists \; q\colon \;l_q \neq j_{k_q}$}.
\end{cases}
\]
\end{lemma}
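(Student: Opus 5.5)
The plan is to build $\nu$ as an average over a rotation parameter of the generalized Riesz--Rademacher products \eqref{28042248}, taken with $a_k = t = e^{i\theta}$ for all $k$. The average is chosen to select the ``fully matching'' chaos terms and kill all others. Throughout, the order $d$ is fixed, and the measure constructed depends on $d$ (as the statement implicitly allows).

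\textbf{Step 1: coefficients of $\rho_\theta$.} For $\theta\in[0,2\pi)$ let $\rho_\theta$ be the measure \eqref{28042248} with $a_k=e^{i\theta}$ and the given exponents $j_k$. We have $\|\rho_\theta\|=1$ by \eqref{ravenstvo}. Each factor has density
\[
1+\tfrac12\bigl(e^{i\theta}R_k^{j_k}+e^{-i\theta}R_k^{p-j_k}\bigr),
\]
since $\overline{R_k^{j}}=R_k^{p-j}$. Because distinct digits are independent, the Vilenkin--Crestenson coefficient of $\rho_\theta$ at $n=l_1p^{k_1}+\dots+l_dp^{k_d}\in N_d$ is the product of the one-digit coefficients. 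The coefficient at $n$ is already attained by $\rho_m$ for every $m>k_d$, so it passes to the weak-$^*$ limit, since the Vilenkin functions are continuous on the dyadic-type group (or piecewise constant on $[0,1)$, which suffices after identifying the limit on $p$-adic intervals).

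The one-digit coefficient at $R_k^{l}$ is:
\begin{itemize}
\item $0$ unless $l\in\{j_k,p-j_k\}$;
\item $\tfrac12 e^{\pm i\theta}$ when $j_k\neq p-j_k$ (the sign depending on the conjugation convention for $\widehat\nu_n$);
\item $\tfrac12(e^{i\theta}+e^{-i\theta})$ when $j_k=p/2$.
\end{itemize}
Fix the convention so that the matching value $l=j_k$ carries the factor $e^{i\theta}$; otherwise replace $\theta$ by $-\theta$ throughout. Then $\widehat{(\rho_\theta)}_n = 2^{-d}\,T_n(\theta)$, where $T_n$ is a trigonometric polynomial of degree at most $d$. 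Its top frequency $e^{id\theta}$ occurs only by choosing the $e^{i\theta}$-term in every one of the $d$ factors.

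\textbf{Step 2: the averaged measure.} Define $\nu$ by duality:
\[
\int f\,d\nu := 2^d\frac{1}{2\pi}\int_0^{2\pi} e^{-id\theta}\Bigl(\int f\,d\rho_\theta\Bigr)\,d\theta, \qquad f\in C.
\]
It satisfies $\|\nu\|\le 2^d$. One needs $\theta\mapsto\int f\,d\rho_\theta$ to be measurable. This holds because it is a limit of the continuous functions $\theta\mapsto\int f\rho_m\,dx$. Alternatively, one can avoid this issue entirely by replacing the integral by the finite average over the $(d+1)$-st roots of unity $\theta=2\pi s/(d+1)$, which also isolates frequency $d$ among frequencies in $[-d,d]$. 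I would actually use the finite average, since it makes $\nu$ a finite combination of Riesz products and removes all measurability questions.

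\textbf{Step 3: verifying the coefficients.} $\widehat\nu_n$ equals $2^d\cdot 2^{-d}$ times the $e^{id\theta}$-coefficient of $T_n$. We check the three cases.
\begin{itemize}
\item If some $l_q\notin\{j_{k_q},p-j_{k_q}\}$, then $T_n\equiv0$, so $\widehat\nu_n=0$.
\item If every $l_q\in\{j_{k_q},p-j_{k_q}\}$ but some $l_q=p-j_{k_q}\ne j_{k_q}$, then that factor contributes only $e^{-i\theta}$. The maximal frequency is then at most $d-2$, so the frequency-$d$ coefficient is $0$.
\item If all $l_q=j_{k_q}$, the frequency-$d$ coefficient of $\prod_q(\cdots)$ is exactly $1$. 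This holds also for the positions where $j_{k_q}=p/2$, because there one takes the $e^{i\theta}$-half of $e^{i\theta}+e^{-i\theta}$. Hence $\widehat\nu_n=1$.
\end{itemize}

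The main point requiring care is Step 1 in the case $p$ even with $j_k=p/2$, where $R_k^{j_k}$ is real and the matching and conjugate patterns coincide. The frequency-extraction argument above handles this uniformly, so I expect no real obstruction there. The remaining care is bookkeeping: the conjugation convention in $\widehat\nu_n$, and justifying that coefficients pass to the weak-$^*$ limit.
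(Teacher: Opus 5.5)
Your $\theta$-integral construction in Step 2 is correct, and it takes a genuinely different route from the paper. The paper fixes a single product $\rho$ with $a_k=e^{2\pi i/(2d+1)}$ (and $a_k=1$ where $R_k^{j_k}$ is real). It notes that $\widehat\rho_n$ takes finitely many values on $N_d$, and sets $\nu=P(\rho)$ in the convolution algebra. Here $P$ has degree $\frac{(d+1)(d+2)}{2}$ and comes from a Vandermonde system: $P(0)=0$, $P=1$ at the values from fully matching patterns, $P=0$ at all others. That route requires checking that these values are distinct. It gives only the inexplicit bound $\|\nu\|\le\|\mathbf{c}\|_{\ell_1}$, which is independent of $J$ but controlled by an inverse Vandermonde matrix, and it treats $j_k=p/2$ separately.

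You instead rotate the phase, $a_k=e^{i\theta}$ for all $k$, and extract the top $\theta$-frequency by orthogonality. This buys three things. No distinctness check is needed. The case $j_k=p/2$ is absorbed automatically, since the factor $e^{i\theta}+e^{-i\theta}$ contributes exactly $1$ to frequency $d$. And you get the explicit, $J$-independent bound $\|\nu\|\le 2^d$, which is what the proof of Theorem~\ref{mainS} needs. Your Step 3 case analysis and your remarks on the weak-$^*$ limit and on measurability are sound.

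However, the finite variant you say you would actually use fails as stated. Averaging $e^{-id\theta}T_n(\theta)$ over the $M$-th roots of unity keeps every frequency congruent to $d$ modulo $M$. The frequencies of $T_n$ are $d-2m$ with $0\le m\le d$, so you need $M\nmid 2m$ for $1\le m\le d$. With $M=d+1$ this breaks for every odd $d$, because $2m=d+1$ is then allowed: $d\equiv -1 \pmod{d+1}$, and $-1$ is an occurring frequency.

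Concretely, for $d=1$ the nodes $\theta\in\{0,\pi\}$ give $\nu=\rho_0-\rho_\pi$. Its coefficient at $R_k^{p-j_k}$ with $j_k\neq p/2$ is $\tfrac12-(-\tfrac12)=1$ instead of $0$. Its coefficient at $R_k^{p/2}$ when $j_k=p/2$ is $\cos 0-\cos\pi=2$ instead of $1$.

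Taking $M=2d+1$ nodes repairs this; that is exactly why the paper's $a$ is a primitive $(2d+1)$-th root of unity. Alternatively, simply keep the integral, which is already rigorous.
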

\begin{proof}
Let $a := \exp\left(\frac{2 \pi i}{2d+1}\right)$ be a $(2d+1)$-th root of unity. Consider the measure $\rho$ defined by \eqref{28042248} such that
\begin{equation*}
    a_k = a, \quad\text{if $R_k^{j_k} \neq \overline{R_k^{j_k}}$}; \; \qquad a_k = 1, \quad\text{if $R_k^{j_k} = \overline{R_k^{j_k}}$.}
\end{equation*}
Let us compute the coefficients of the measure $\rho$:
\[
\widehat{\rho}_n =  \int_0^1 \overline{R_{k_1}^{l_{1}}\cdot \ldots \cdot R_{k_d}^{l_{d}}}d \rho, \quad n = l_1p^{k_1}+\ldots+l_dp^{k_d}.
\]
Then for $n\in N_d$:
\[
\widehat{\rho}_n \in {\cal M}_d, \quad  {\cal M}_d := \bigsqcup_{l=0}^{d}\left\{\frac{a^{d-l}}{2^{d-l}}, \frac{a^{d-l-2}}{2^{d-l}}, \ldots,  \frac{a^{-(d-l)}}{2^{d-l}} \right\} \bigsqcup \{0\}.
\]
By construction, for indices $n \in N_d$ satisfying $l_1 = j_{k_1}, \ldots, l_d = j_{k_d}$, the corresponding coefficients belong to the set
\[
    {\cal J}_d := \left\{\frac{a^d}{2^d}, \frac{a^{d-1}}{2^{d-1}}, \ldots, 1 \right\} \subset {\cal M}_d.
\]
We seek a measure $\nu$ of the form
\begin{equation} \label{mera}
\nu = c_0 \delta_0 +c_1 \rho + \ldots + c_{\frac{(d+1)(d+2)}{2}}  \underbrace{\rho * \ldots *\rho}_{\text{$\frac{(d+1)(d+2)}{2}$ times}}, \quad \delta_0 \text{is the Dirac delta measure at zero}, \; c_i \in \mathbb{C},
\end{equation}
and $*$ means convolution of measures.
Then the Fourier coefficients of measure $\nu$ are expressed as a polynomial of degree $\frac{(d+1)(d+2)}{2}$:
\begin{equation} \label{coef}
    \widehat{\nu}_n = c_0 +c_1 \widehat{\rho}_n + \ldots +  c_{\frac{(d+1)(d+2)}{2}} (\widehat{\rho}_n)^{\frac{(d+1)(d+2)}{2}}. 
\end{equation}
Let \[T := \begin{bmatrix}
    1 & 0 & 0 & \cdots & \cdots & 0 \\
    1 & t_{00} & t_{00}^2 & \cdots & \cdots & t_{00}^{\frac{(d+1)(d+2)}{2}} \\
    \vdots & \vdots & \vdots  & \vdots & \vdots  & \vdots \\
    1 & t_{0d} & t_{0d}^2 &  \cdots & \cdots & t_{0d}^{\frac{(d+1)(d+2)}{2}} \\
    \vdots & \vdots & \vdots & \vdots & \vdots  & \vdots \\
    1 & t_{d0} & t_{d0}^2 &  \cdots & \cdots & t_{d0}^{\frac{(d+1)(d+2)}{2}}
\end{bmatrix}, \qquad \mathbf{b} := \begin{bmatrix}
    0 \\
    \mathbf{e}_{0}  \\
        \vdots \\
    \mathbf{e}_{l} \\
    \vdots \\
    \mathbf{e}_{d} 
\end{bmatrix},  \qquad \mathbf{c} := \begin{bmatrix}
    c_0  \\
    c_1 \\
    \vdots \\
    c_{\frac{(d+1)(d+2)}{2}}
\end{bmatrix},
\]
where $t_{li} := \frac{a^{2i-(d-l)}}{2^{d-l}}$, for $l \in 0:d$ и $i \in 0:d-l$ и $\mathbf{e}_{l} := (0, 0, \ldots, 0, 1)^{T} \in \mathbb{R}^{d-l+1}$.

We obtain a system of equations for the coefficients of the measure $\nu$: $T\mathbf{c} = \mathbf{b}$, where $T$ is a Vandermonde matrix. It is straightforward to verify that the parameters $t_{lj}$ are chosen such that the matrix is non-singular:
\[
\det T = \prod (t_{lj} - t_{ki}),
\]
where the product is taken over all admissible pairs of indices $(l, i)$ and $(k, j)$. Suppose there exist two pairs $(l, i)$ and $(k, j)$ such that
\[
t_{lj} - t_{ki} = 0.
\]
This leads to two cases:

1) $k \neq l$, then
\[\frac{a^{2i-(d-l)}}{2^{d-l}} = \frac{a^{2i-(d-k)}}{2^{d-k}}, \]
i.e. $2^{k-l}a^{2i-(d-l)} = a^{2i-(d-k)}$, but this is impossible since $|a|=1$.

2) $k = l$, then
\[
t_j - t_i = \frac{1}{2^{d-l}}(a^{2j-(d-l)} - a^{2i - (d-l)}) = 0,  
\]
if 
\[
2j - (d-l) = 2i - (d-l) \qquad (mod \;  2d+1).
\]
\[2(j-i) = 0 \qquad (mod \;  2d+1). \]
Then, 
\[j-i = 0 \qquad (mod \;  2d+1), \]
but this is impossible since $1 \le |j-i| \le d$ for distinct $i$ and $j$.

The measure $\nu$ satisfies the required conditions, and its total variation is bounded from above by Young's inequality:
\begin{equation} \label{var-nu}
    \|\nu\|  \le \|\mathbf{c}\|_{l_1}.
\end{equation}
\end{proof}

The following lemma generalizes Lemma 22 from \cite[Ch. VII]{Blei}.
\begin{lemma} \label{lemma-2}
Let $s \le d$ be fixed. Then there exists a measure $\nu$ such that the following holds for its Fourier--Vilenkin--Crestenson coefficients:
\[
\begin{cases}
    \widehat{\nu}_n =1, \quad n \in N_s, \\
    \widehat{\nu}_n =0, \quad n \in \bigsqcup\limits_{j\le d\colon j \neq s} N_j.
\end{cases}
\] 
\end{lemma}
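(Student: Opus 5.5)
The plan is to mimic the proof of Lemma \ref{lemma-1}, but with a Riesz--Rademacher type product that charges \emph{all} powers $R_k^l$, $l\in\{1,\dots,p-1\}$, equally. This way the Fourier--Vilenkin--Crestenson coefficient on $N_j$ depends only on the order $j$ and not on the exponents $l_1,\dots,l_j$. For $0\le r\le 1$ I will consider
\[
\mu_{r,n}(x)=\prod_{k=0}^{n}\Bigl(1+r\sum_{l=1}^{p-1}R_k^l(x)\Bigr).
\]
Since $\sum_{l=0}^{p-1}R_k^l(x)$ equals $p$ when the $(k+1)$-st $p$-ary digit of $x$ is $0$ and equals $0$ otherwise, each factor is $1-r+r\sum_{l=0}^{p-1}R_k^l\ge 1-r\ge 0$. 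Hence $\mu_{r,n}\ge 0$, and by orthogonality $\int_0^1\mu_{r,n}\,d\mu=1$. Exactly as for \eqref{ravenstvo}, the Banach--Alaoglu theorem gives a positive weak-$^*$ limit measure $\mu_r$ with $\|\mu_r\|=1$.

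Next I will compute the coefficients. Expanding the product, the character $R_{k_1}^{l_1}\cdots R_{k_j}^{l_j}$ with $k_1<\dots<k_j$ appears exactly once, with coefficient $r^j$, because distinct $k$ are independent digits. Therefore $\widehat{(\mu_r)}_n=r^j$ for every $n\in N_j$ (with $N_0=\{0\}$, so $\widehat{(\mu_r)}_0=1$). Passing to the weak-$^*$ limit is harmless because the coefficient is stabilized once $n$ exceeds $k_j$.

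Now fix distinct numbers $0<t_0<t_1<\dots<t_d\le 1$ (say $t_i=\frac{i+1}{d+1}$) and seek
\[
\nu=\sum_{i=0}^{d}c_i\,\mu_{t_i},\qquad \widehat\nu_n=\sum_{i=0}^d c_i t_i^{\,j}\quad (n\in N_j).
\]
The requirements $\widehat\nu_n=1$ on $N_s$ and $\widehat\nu_n=0$ on $N_j$, $j\le d$, $j\ne s$, become the linear system $\sum_i c_i t_i^{\,j}=\delta_{js}$ for $j=0,\dots,d$. Its matrix is the (transposed) Vandermonde matrix $(t_i^{\,j})$, which is nonsingular because the $t_i$ are distinct, so $\mathbf c$ exists. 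The total variation then satisfies $\|\nu\|\le\|\mathbf c\|_{\ell_1}$, a constant depending only on $d$ (and the choice of $t_i$), as in \eqref{var-nu}. Alternatively, one could use convolution powers $\mu_r^{*m}$ of a single $\mu_r$, whose coefficients on $N_j$ are $(r^j)^m$, and solve a Vandermonde system in the distinct nodes $r^0,\dots,r^d$, exactly as in \eqref{mera}.

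The only genuine obstacle is the positivity and normalization of the generalized Riesz product, since the naive product with $\frac12(R_k^{j}+\overline{R_k^{j}})$ from \eqref{28042248} cannot see every exponent $l$ uniformly. This is resolved by the identity $\sum_{l=0}^{p-1}R_k^l=p\,\mathbf 1_{\{\text{digit}=0\}}\ge 0$. Everything else is linear algebra.
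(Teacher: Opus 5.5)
Your proposal is correct and is essentially the paper's argument. The paper's measure $\rho=\operatorname{weak-}^*\lim\prod_{j}\bigl(1+\frac{1}{p}\sum_{l=1}^{p-1}R_j^l\bigr)$ is exactly your $\mu_{1/p}$, and the paper takes $\nu=\sum_{i=1}^{d}c_i\rho^{*i}$ with the coefficients of an interpolation polynomial at the nodes $0,p^{-1},\dots,p^{-d}$, which is your convolution-power variant. Your main variant with distinct parameters $t_i$ is the same construction in disguise: coefficients multiply under convolution, so $\mu_r*\mu_{r'}=\mu_{rr'}$, and taking $t_i=p^{-i}$ reproduces the paper's $\nu$; the only cosmetic difference is that you also impose $\widehat\nu_0=0$.
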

\begin{proof}
    Consider the measure
    \[
    \rho= weak^*\lim_{n \rightarrow\infty} \prod_{j=0}^{n} \left(1 + \frac{R_j + R_j^2 + \ldots + R_j^{p-1}}{p}\right), \qquad \|\rho\|=1.
    \]
    Coefficients of this measure 
    \[
    \widehat{\rho}_n= 1/p^s, \quad \text{for $n\in N_s$}.
    \]
    There exists a polynomial $P$ of degree $d$ such that
    \[
    P(0) = 0; \; P\left(\frac{1}{p^s}\right) = 1; \; P\left(\frac{1}{p^j}\right) = 0, \, \text{for $j\in \{1, \ldots, d\} \setminus \{s\}$}.
    \]
    Then the desired measure $\nu$ is given by
    \[
    \nu = c_1\rho + \ldots + c_d \underbrace{\rho * \ldots * \rho}_{\text{$d$ times}},
    \]
where $c_i$ are the coefficients of the polynomial $P$. The total variation of the measure $\nu$ satisfies an estimate analogous to \eqref{var-nu}.
\end{proof}
\section{Main results}
\begin{theorem} \label{nadolyche}
    For any sequence $J = (j_i)_{i=1}^{\infty}$ with $j_i \in \{1, \dots, p-1\}$, the system of functions \[
     \{R_{k_1}^{j_{k_1}} \cdot \ldots \cdot R_{k_d}^{j_{k_d}}, k_1 <\ldots<k_d\}.
    \] is a $\frac{2d}{d+1}$-Sidon system:
    \[
            \|(C_{k_1, \ldots, k_d})_{{0\le k_1 < \ldots < k_d \le N}} \|_{l_{2d/(d+1)}} \le 
    \]
    \begin{equation*}
\le C \bigl\|\sum_{0\le k_1 < \ldots < k_d \le N}  C_{k_1, \ldots, k_d} R^{j_{k_1}}_{k_1}(x) \cdot \ldots \cdot R^{j_{k_d}}_{k_d}(x) \bigl\|_{L_{\infty}}, \quad C_{i_1,\ldots, i_s}^{j_1, \ldots, j_s} \in \mathbb{C},
    \end{equation*} 
where the constant $C$ is independent of both the polynomial and the sequence $J$.

\end{theorem}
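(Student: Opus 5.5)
We follow Blei's scheme for the tetrahedral chaos: a multilinear duality argument that uses Riesz-type product measures to extract coefficients. Write
\[
f=\sum_{k_1<\ldots<k_d} C_{k_1,\ldots,k_d}\prod_{q=1}^d R_{k_q}^{j_{k_q}},\qquad \|f\|_{L_\infty}=1.
\]
Throughout, the constants in the measures built from $\nu$ depend only on $d$ (and the fixed $p$), and never on $J$ or $N$.

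\textbf{Step 1 (reduction to a bounded-coefficient problem).} By duality between $\ell_{2d/(d+1)}$ and $\ell_{2d/(d-1)}$, it suffices to show the following. For every array $(\phi_{k_1,\ldots,k_d})$ with $|\phi|\le1$ supported on a set of size $M$, there is a measure $\mu$ with
\[
\widehat\mu_{n}=\phi_{k_1,\ldots,k_d}\quad\text{at } n=j_{k_1}p^{k_1}+\ldots+j_{k_d}p^{k_d},\qquad \|\mu\|\le C\,M^{(d-1)/(2d)}.
\]
From such a $\mu$ we get $\sum C\overline{\phi}=\int f\,d\overline{\mu}$, and hence $\|C\|_{\ell_{2d/(d+1)}}\le C'\|f\|_\infty$ by the standard distribution-function (Lorentz-space) argument. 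As in Blei, one can also replace $M^{(d-1)/(2d)}$ by the combinatorial dimension estimate of the support.

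This interpolation is done exactly as in Blei, Ch.~VII, where the $2d/(d+1)$ exponent arises from an inductive Cauchy--Schwarz (Littlewood-type mixed-norm) inequality. Concretely, we plan to prove
\[
\sum_{k_1}\Bigl(\sum_{k_2,\ldots,k_d}|C_{k_1\ldots k_d}|^2\Bigr)^{1/2}\le C\|f\|_\infty
\]
together with its permutations, and then apply Blei's mixed-norm lemma to obtain the $\ell_{2d/(d+1)}$ bound.

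\textbf{Step 2 (mixed-norm inequality).} Fix the first variable $k_1$. For signs/phases $\varepsilon_{k_1}$, form the Riesz product $\rho$ from \eqref{28042248} with $a_{k}=\varepsilon_{k}$ on the chosen powers $j_k$. Convolve $f$ against it, or rather integrate $f$ against $\rho$ in the variables indexed by $k_1$ while keeping the others as a function. This yields
\[
\Bigl\|\sum_{k_1}\overline{\varepsilon_{k_1}}\,g_{k_1}\Bigr\|_\infty\lesssim\|f\|_\infty,\qquad g_{k_1}=\sum_{k_2,\ldots} C_{k_1\ldots}\prod_{q\ge2} R_{k_q}^{j_{k_q}}.
\]
The obstacle is that $\rho$ has coefficients at all of $N_s$ for every $s$, and at indices with wrong powers $l_q\ne j_{k_q}$. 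Lemma~\ref{lemma-1} and Lemma~\ref{lemma-2} are designed to kill exactly these spurious terms. Convolving $\rho$ with the measure of Lemma~\ref{lemma-1}, which keeps only the correct powers $J$, and with that of Lemma~\ref{lemma-2}, which keeps only the order $d$ chaos, gives a measure of variation $\le C(d)$. This measure acts as a projection onto the $J$-chaos of order exactly $d$, so no cross terms with wrong exponents survive. Then $L_\infty\subset L_2$ on the remaining variables, combined with Khintchine-type orthogonality of the $R$-products in the other variables, gives $\sum_{k_1}\|g_{k_1}\|_2\le C\|f\|_\infty$.

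\textbf{Step 3 (main obstacle).} The key difficulty is the separation of variables. We need the Riesz product in the $k_1$-slot to interact with the other $d-1$ slots only through the ordering $k_1<\ldots<k_d$, and a single Riesz product on all indices produces mixed products in several slots. We would handle this as Blei does, by randomizing over a partition of $\mathbb N$ into $d$ blocks. A positive proportion (about $d^{-d}$) of the ordered tuples have $k_q$ in block $q$, and averaging over random partitions reduces the problem to the decoupled case. In the decoupled case, the Riesz product lives only on block $1$, and Lemma~\ref{lemma-2} restricted to that block isolates degree $1$ there. The uniformity in $J$ is automatic, because all measures from Lemma~\ref{lemma-1} and Lemma~\ref{lemma-2} have variation bounded by $\|\mathbf c\|_{\ell_1}$, which depends only on $d$ and $p$.
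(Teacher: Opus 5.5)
Your architecture (decouple, prove $\ell_1(\ell_2)$ mixed-norm bounds in each slot, then apply Blei's mixed-norm inequality) is viable. It is, however, a genuinely different and much heavier route than the paper's. The paper never reproves a Bohnenblust--Hille-type inequality; it transfers the problem to the binary chaos. It takes the Riesz product $\rho_y$ built on $J$ with $a_k=r_k(y)$, and $a_k=r_k(y)/2$ when $R_k^{j_k}$ is real. Then $\|\rho_y\|=1$ and $\widehat{(\rho_y)}_n=2^{-d}r_{k_1}(y)\cdots r_{k_d}(y)$ at $n=j_{k_1}p^{k_1}+\ldots+j_{k_d}p^{k_d}$. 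Convolving $f$ with $\rho_y$ therefore gives $\bigl|\sum C_{k_1,\ldots,k_d}R_{k_1}^{j_{k_1}}(x)\cdots R_{k_d}^{j_{k_d}}(x)\,r_{k_1}(y)\cdots r_{k_d}(y)\bigr|\le 2^d\|f\|_{L_\infty}$ for all $x,y$. Freezing $x$, this is a classical binary $d$-chaos in $y$ whose coefficients have moduli $|C_{k_1,\ldots,k_d}|$. Blei's $\frac{2d}{d+1}$-Sidon theorem for that chaos finishes the proof with $C=2^dC_{\mathrm{Blei}}(d)$, visibly independent of $J$ (and even of $p$). Your route amounts to re-proving Blei's theorem directly for the $p$-ary $J$-chaos. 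It needs only Blei's combinatorial mixed-norm lemma as a black box, but pays with a decoupling step and a Khintchine inequality for the $p$-ary variables, neither of which the transference needs.

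As written, several steps of your sketch need repair.

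Step 1 is not a valid reduction. Interpolating bounded data on $M$ points with $\|\mu\|\le CM^{(d-1)/(2d)}$ only yields $\sum_{(k_1,\ldots,k_d)\in S}|C_{k_1,\ldots,k_d}|\le CM^{(d-1)/(2d)}\|f\|_{L_\infty}$ for index sets $S$ of size $M$. That is the weak $\ell_{2d/(d+1),\infty}$ bound. The strong bound at the critical exponent is exactly what the mixed-norm route provides, so Step 1 should simply be dropped.

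The spurious coefficients of $\rho$ are not an obstacle. $f*\rho$ only sees $\widehat\rho$ on the spectrum of $f$, which already lies in the $J$-chaos of order $d$. Lemmas~\ref{lemma-1} and~\ref{lemma-2} play no role in this theorem; the paper uses them only for Theorem~\ref{mainS} and the Remark.

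In Step 2, the sup bound gives $\sum_{i_1}|g_{i_1}(y)|\lesssim\|f\|_{L_\infty}$ pointwise, hence $\sum_{i_1}\|g_{i_1}\|_{L_1}\lesssim\|f\|_{L_\infty}$. Passing to $\sum_{i_1}\|g_{i_1}\|_{L_2}$ requires the $L_1$--$L_2$ Khintchine (Bonami) inequality for $(d-1)$-chaos in the variables $R_k^{j_k}$. The embedding $L_\infty\subset L_2$ plus orthogonality does not give it.

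In Step 3, for a fixed partition $\beta$, project onto sets having exactly one index in each block; these occur in proportion $d!/d^d$. Do not project onto tuples whose $q$-th smallest index lies in block $q$: that is a triangular truncation with no evident $L_\infty$ bound. For the one-index-per-block projection $P_\beta$, the bound $\|P_\beta f\|_{L_\infty}\le 2^d\|f\|_{L_\infty}$ follows from Riesz products with $a_k=t_q$ for $k$ in block $q$, by extracting the coefficient of $t_1\cdots t_d$ over $\mathbb{T}^d$.
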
 
\begin{proof}
For a set $J = (j_k)_{k=0}^{\infty}$ and for a fixed $y \in [0,1],$ consider the measure $\rho_y$ defined by formula \eqref{28042248}, constructed from this set, such that
\[
 a_k = r_k(y), \quad \text{if $R_k^{j_k} \neq \overline{R_k^{j_k}}$}; \qquad a_k = \frac{r_k(y)}{2}, \quad \text{if $R_k^{j_k} = \overline{R_k^{j_k}}$}.
\]
Let us compute the coefficients of the measure  $\rho_y$:
\[
\widehat{(\rho_y)}_n = \frac{r_{k_1}(y)\cdot \ldots \cdot r_{k_d}(y)}{2^d},\qquad \text{where} \;   
    n = j_{k_1}p^{k_1} + \ldots + j_{k_d}p^{k_d}.\]
Note that the total variation of the measure $\rho_y$ satisfies $\|\rho_y\| = 1$.

Consider
\[
\left[\sum_{k_1<\ldots<k_d} C_{k_1,\ldots, k_d} R_{k_1}^{j_{k_1}}(x)\ldots R_{k_d}^{j_{k_d}}(x)\right] * \rho_y = \]
\[\int_0^1 \left[\sum_{k_1<\ldots<k_d} C_{k_1,\ldots, k_d} R_{k_1}^{j_{k_1}}(x\ominus z)\ldots R_{k_d}^{j_{k_d}}(x\ominus z)\right] d\rho_y(z) = 
\]
\[
 \sum_{k_1<\ldots<k_d} C_{k_1,\ldots, k_d} R_{k_1}(x)\ldots R_{k_d}(x)\int_0^1  \overline{R_{k_1}(z)\ldots R_{k_d}(z)} d\rho_y(z) = 
\]
\[
 \sum_{k_1<\ldots<k_d} C_{k_1,\ldots, k_d} R_{k_1}^{j_{k_1}}(x)\ldots R_{k_d}^{j_{k_d}}(x)\widehat{(\rho_y)}_{j_{k_1}p^{k_1} + \ldots + j_{k_d}p^{k_d}} = 
\]
\[
 \frac{1}{2^d}\sum_{k_1<\ldots<k_d} C_{k_1,\ldots, k_d} R_{k_1}^{j_{k_1}}(x)\ldots R_{k_d}^{j_{k_d}}(x)  r_{k_1}(y)\cdot \ldots \cdot r_{k_d}(y).
\]

Then, applying Young's inequality for convolutions, we obtain:
\[
\biggl\|  \frac{1}{2^d}\sum_{k_1<\ldots<k_d} C_{k_1,\ldots, k_d} R_{k_1}^{j_{k_1}}(x)\ldots R_{k_d}^{j_{k_d}}(x)  r_{k_1}(y)\cdot \ldots \cdot r_{k_d}(y) \biggl\|_{L_\infty(x)} \le 
\]
\[
\biggl\| \sum_{k_1<\ldots<k_d} C_{k_1,\ldots, k_d} R_{k_1}^{j_{k_1}}(x)\ldots R_{k_d}^{j_{k_d}}(x)\biggl\|_{L_\infty(x)} .
\]
Since this is true for all $y$, the following is also true
\[\biggl\|  \frac{1}{2^d}\sum_{k_1<\ldots<k_d} C_{k_1,\ldots, k_d} R_{k_1}^{j_{k_1}}(x)\ldots R_{k_d}^{j_{k_d}}(x)  r_{k_1}(y)\cdot \ldots \cdot r_{k_d}(y)\biggl\|_{L_\infty(x) \times  L_\infty(y)}\le 
\]
\[
\biggl\| \sum_{k_1<\ldots<k_d} C_{k_1,\ldots, k_d} R_{k_1}^{j_{k_1}}(x)\ldots R_{k_d}^{j_{k_d}}(x)\biggl\|_{L_\infty(x)} .
\]
Thus, for almost all $x^*$:
\[\biggl\|\sum_{k_1<\ldots<k_d} C_{k_1,\ldots, k_d} R_{k_1}^{j_{k_1}}(x^*)\ldots R_{k_d}^{j_{k_d}}(x^*)  r_{k_1}(y)\cdot \ldots \cdot r_{k_d}(y) \biggl\|_{L_\infty(y)}\le 
\]
\[
2^d\biggl\| \sum_{k_1<\ldots<k_d} C_{k_1,\ldots, k_d} R_{k_1}^{j_{k_1}}(x)\ldots R_{k_d}^{j_{k_d}}(x)\biggl\|_{L_\infty(x)} .
\]
On the other hand, by the $\frac{2d}{d+1}$-Sidon inequality for the classical $d$-th order Rademacher chaos,
\[
 \biggl(\sum_{k_1< \ldots < k_d} \bigl|C_{k_1, \ldots, k_d} R_{k_1}^{j_{k_1}}(x^*)\ldots R_{k_d}^{j_{k_d}}(x^*)\bigl|^{\frac{2d}{d+1}} \biggl)^{\frac{d+1}{2d}} \le \]
\[
 C \biggl\|\sum_{k_1<\ldots<k_d} C_{k_1,\ldots, k_d} R_{k_1}^{j_{k_1}}(x^*)\ldots R_{k_d}^{j_{k_d}}(x^*)  r_{k_1}(y)\cdot \ldots \cdot r_{k_d}(y) \biggl\|_{L_\infty(y)}.
\]
Noting that
\[
\bigl|C_{k_1, \ldots, k_d} R_{k_1}^{j_{k_1}}(x^*)\ldots R_{k_d}^{j_{k_d}}(x^*)\bigl| = \bigl|C_{k_1, \ldots, k_d}\bigl|,
\]
obtain
 \[ \biggl(\sum_{k_1< \ldots <k_d} \bigl|C_{k_1, \ldots, k_d}\bigl|^{\frac{2d}{d+1}}\biggl)^{\frac{d+1}{2d}} \le 2^dC \biggl\| \sum_{k_1<\ldots<k_d} C_{k_1,\ldots, k_d} R_{k_1}^{j_{k_1}}(x)\ldots R_{k_d}^{j_{k_d}}(x)\biggl\|_{L_\infty(x)}.
 \]
 \end{proof}
 
\begin{theorem} \label{mainS} 
The system $A_d$ is a $\frac{2d}{d+1}$-Sidon system, i.e., the following inequality holds:
    \[
\biggl\|\left(C^{l_1, \ldots, l_d }_{k_1, \ldots, k_d}\right)_{{0\le k_1 < \ldots < k_d \le N}, l_1, \ldots, l_d \in \{1, \dots, p-1\} } \biggl\|_{l_{2d/(d+1)}} \le 
    \]
    \begin{equation*}
\le C \biggl\|\sum_{0\le k_1 < \ldots < k_d \le N} \; \sum_{l_1, \ldots, l_d \in \{1, \dots, p-1\}} C^{l_1, \ldots, l_d }_{k_1, \ldots, k_d} R^{l_1}_{k_1}(x) \cdot \ldots \cdot R^{l_d}_{k_d}(x) \biggl\|_{L_{\infty}},  \quad C_{i_1,\ldots, i_s}^{j_1, \ldots, j_s} \in \mathbb{C},
    \end{equation*}
where the constant $C$ is independent of the polynomial.
\end{theorem}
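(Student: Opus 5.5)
The plan is to reduce Theorem \ref{mainS} to Theorem \ref{nadolyche} by using Lemma \ref{lemma-1} to isolate each ``colour pattern'' of exponents. Write
\[
f(x)=\sum_{k_1<\ldots<k_d}\;\sum_{l_1,\ldots,l_d} C^{l_1,\ldots,l_d}_{k_1,\ldots,k_d} R^{l_1}_{k_1}(x)\cdots R^{l_d}_{k_d}(x).
\]
For a sequence $J=(j_k)$ with values in $\{1,\ldots,p-1\}$, let $\nu_J$ be the measure from Lemma \ref{lemma-1}. Convolving $f$ with $\nu_J$ acts as a Fourier multiplier on the Vilenkin--Crestenson spectrum, which lies in $N_d$. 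It keeps exactly the terms with $l_q=j_{k_q}$ for all $q$ and kills all the others. Hence
\[
f*\nu_J=\sum_{k_1<\ldots<k_d} C^{j_{k_1},\ldots,j_{k_d}}_{k_1,\ldots,k_d} R^{j_{k_1}}_{k_1}\cdots R^{j_{k_d}}_{k_d},
\]
and Young's inequality gives $\|f*\nu_J\|_{L_\infty}\le \|\nu_J\|\,\|f\|_{L_\infty}\le \|\mathbf c\|_{l_1}\|f\|_{L_\infty}$. The key point to check is that this bound is uniform in $J$. The Vandermonde system $T\mathbf c=\mathbf b$ depends only on $d$ through the nodes $t_{li}$, not on $J$ or on $p$. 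The only $J$-dependence is the choice $a_k=a$ versus $a_k=1$, and in both cases the coefficients land in the same set $\mathcal M_d$. So $\|\nu_J\|\le K_d$.

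Next, apply Theorem \ref{nadolyche} to $f*\nu_J$. Its constant is independent of $J$, so
\[
\sum_{k_1<\ldots<k_d}\bigl|C^{j_{k_1},\ldots,j_{k_d}}_{k_1,\ldots,k_d}\bigr|^{q}\le (C K_d\|f\|_{L_\infty})^{q}, \qquad q=\tfrac{2d}{d+1}.
\]

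The main obstacle is recovering the full $\ell_q$ sum over all $(k,l)$. A fixed $J$ only sees one exponent pattern per index tuple, so the estimates must be aggregated over many $J$'s. My first attempt is to average over $J$ drawn uniformly at random, with the $j_k$ i.i.d.\ uniform on $\{1,\ldots,p-1\}$. Each coefficient $C^{l_1,\ldots,l_d}_{k_1,\ldots,k_d}$ is then selected with probability $(p-1)^{-d}$. Taking expectations gives
\[
(p-1)^{-d}\sum_{k,l}\bigl|C^{l}_{k}\bigr|^q\le (CK_d\|f\|_{L_\infty})^q ,
\]
so the theorem holds with constant $(p-1)^{d/q}CK_d$. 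This constant depends on $p$ but not on the polynomial, which is what the statement requires.

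If one wants to avoid randomness, the same count can be done deterministically. Choose a finite family of sequences $J$ such that every $(k_1<\ldots<k_d;\,l_1,\ldots,l_d)$ is realized by at least one $J$. The $(p-1)^d$ sequences $j_k=\sigma(k \bmod \cdot)$ do not obviously suffice for arbitrary tuples, so I would instead sum over all $J$ restricted to $\{0,\ldots,N\}$ and normalize. This is exactly the averaging argument above. I would also double-check that the coefficient formula in Lemma \ref{lemma-1} indeed yields $\widehat{\nu}_n=1$ on matched indices regardless of how many $j_k$ satisfy $R^{j_k}_k=\overline{R^{j_k}_k}$. This requires the matched values of $\widehat\rho_n$ to be among the nodes assigned value $1$ in $\mathbf b$.
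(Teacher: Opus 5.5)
Your proposal is correct and is essentially the paper's proof. The paper likewise extracts the $J$-pattern with the measure of Lemma \ref{lemma-1}, whose variation bound $\|\mathbf{c}\|_{l_1}$ depends only on $d$, and applies Theorem \ref{nadolyche} with its $J$-independent constant. It then sums over all $(p-1)^{N+1}$ choices of $j_0,\ldots,j_N$, each coefficient being counted $(p-1)^{N+1-d}$ times; this is exactly your uniform averaging and gives the same $(p-1)^{(d+1)/2}$ factor in the constant. The point you flagged in Lemma \ref{lemma-1} checks out. A matched index whose tuple has $l$ positions with $R_k^{j_k}=\overline{R_k^{j_k}}$ gets $\widehat{\rho}_n=(a/2)^{d-l}=t_{l,d-l}$, which is precisely the node where $\mathbf{e}_l$ places its $1$.
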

\begin{proof}
We denote  
\[Q = \sum_{0\le k_1 < \ldots < k_d \le N} \; \sum_{l_1, \ldots, l_d \in \{1, \dots, p-1\}} C^{l_1, \ldots, l_d }_{k_1, \ldots, k_d} R^{l_1}_{k_1}(x) \cdot \ldots \cdot R^{l_d}_{k_d}(x).  \]
For a fixed sequence $J = (j_k)_{k=0}^{\infty}$, consider the measure $\nu$ constructed in Lemma \ref{lemma-1}. We have the following chain of equalities: 
\[
Q * \nu = \int_0^1 Q(x\ominus y) d\nu(y) = 
\]
\[
= \sum_{0\le k_1 < \ldots < k_d \le N} \; \sum_{l_1, \ldots, l_d \in \{1, \dots, p-1\}} R^{l_1}_{k_1}(x) \cdot \ldots \cdot R^{l_d}_{k_d}(x) \int_{0}^{1} \overline{R^{l_1}_{k_1}(y) \cdot \ldots \cdot R^{l_d}_{k_d}(y)} d \nu(y) = 
\]
\[
= \sum_{0\le k_1 < \ldots < k_d \le N} \; \sum_{l_1, \ldots, l_d \in \{1, \dots, p-1\}} R^{l_1}_{k_1}(x) \cdot \ldots \cdot R^{l_d}_{k_d}(x) \widehat{\nu}_{l_1p^{k_1} + \ldots + l_dp^{k_d}} = 
\]
\[
\sum_{0\le k_1 < \ldots < k_d \le N}  R^{j_{k_1}}_{k_1}(x) \cdot \ldots \cdot R^{j_{k_d}}_{k_d}(x) =: Q_{j_0, \ldots, j_N}. 
\]
Note that (this decomposition was used in \cite{KazPlo25})
\begin{equation} \label{12}
    Q = \frac{1}{(p-1)^{N+1-d}} \sum_{j_0, \ldots, j_N} Q_{j_0, \ldots, j_N}.
\end{equation}
Applying Young's inequality for convolutions, we obtain:
\begin{equation} \label{ayaya}
    \|Q_{j_0, \ldots, j_N}\|_{{L_{\infty}}} = \|Q * \nu\|_{{L_{\infty}}} \le \|\nu\| \cdot \|Q\|_{L_{\infty}} \le C_1  \|Q\|_{L_{\infty}}.
\end{equation}

Note that, according to \eqref{var-nu}, the constant $C_1$ is independent of the choice of $j_0, \ldots, j_N$. By \cite[Ch. VII, (12.25)]{Blei}, we have for $Q_{j_0, \ldots, j_N}$:
\begin{equation} \label{13}
    \sum_{k_1< \ldots<k_d} \bigl|C_{k_1, \ldots, k_d}^{j_{k_1}, \ldots, j_{k_d}}\bigl|^{\frac{2d}{d+1}} \le C_2 \bigl\|Q_{j_0, \ldots, j_N}\bigl\|_{L_{\infty}}^{\frac{2d}{d+1}},
\end{equation}
where, by Theorem \ref{nadolyche}, the constant $C_2$ is independent of the collection $(j_k)_{k=0}^{\infty}$.

Since formula \eqref{ayaya} is true for any set $j_0, \ldots, j_N$, the following is also true:
\[
\sum_{j_0, \ldots, j_N} \|Q_{j_0, \ldots, j_{N}}\|_{\infty}^{\frac{2d}{d+1}} \le  C_1\sum_{j_0, \ldots, j_N} \|Q\|_{\infty}^{\frac{2d}{d+1}}.
\]
Taking into account \eqref{13},
\[
\sum_{j_0, \ldots, j_N} \sum_{k_1< \ldots k_d} \bigl|C_{k_1, \ldots, k_d}^{j_{k_1}, \ldots, j_{k_d}}\bigl|^{\frac{2d}{d+1}} \le  C_1 C_2 \sum_{j_0, \ldots, j_N} \|Q\|_{\infty}^{\frac{2d}{d+1}}.
\]
Taking into account \eqref{12}, 
\[
\sum_{j_0, \ldots, j_N} \sum_{0<k_1< \ldots <k_d\le N} \bigl|C_{k_1, \ldots, k_d}^{j_{k_1}, \ldots, j_{k_d}}\bigl|^{\frac{2d}{d+1}} = (p-1)^{N+1-d} \sum_{0<k_1< \ldots< k_d\ \le N} \sum_{l_1, \ldots, l_d \in \{1, \dots, p-1\}}  \bigl|C_{k_1, \ldots, k_d}^{l_{1}, \ldots, l_{d}}\bigl|^{\frac{2d}{d+1}}.
\]
On the other hand,
 \[
 \sum_{j_0, \ldots, j_N} \|Q\|_{\infty}^{\frac{2d}{d+1}} = (p-1)^{N+1}\|Q\|_{\infty}^{\frac{2d}{d+1}}
 \]
 Then
 \[
 (p-1)^{N+1-d}\sum_{j_0, \ldots, j_N} \sum_{0<k_1< \ldots< k_d\ \le N} \bigl|C_{k_1, \ldots, k_d}^{j_{k_1}, \ldots, j_{k_d}}\bigl|^{\frac{2d}{d+1}} \le C_1 C_2 (p-1)^{N+1}\|Q\|_{\infty}^{\frac{2d}{d+1}}.
 \]
Hence, we obtain
    \[
            \bigl\|(C^{l_1, \ldots, l_d }_{k_1, \ldots, k_d})_{{0\le k_1 < \ldots < k_d \le N}, l_1, \ldots, l_d \in 1:p-1 } \bigl\|_{l_{2d/(d+1)}} \le 
    \]
        \begin{equation*}
\le C \biggl\|\sum_{0\le k_1 < \ldots < k_d \le N} \; \sum_{l_1, \ldots, l_d \in 1:p-1} C^{l_1, \ldots, l_d }_{k_1, \ldots, k_d} R^{l_1}_{k_1}(x) \cdot \ldots \cdot R^{l_d}_{k_d}(x) \biggl\|_{L_{\infty}}, 
    \end{equation*}
    where $C = (C_1C_2(p-1)^d)^{\frac{d+1}{2d}}$.
\end{proof}


\begin{remark} We note that Theorem \ref{mainS} can be extended from the case of generalized $p$-adic $d$-Rademacher chaos, i.e., the system $A_d$, to the case of the union of $s$-chaos for all $s$ not exceeding $d$.
\end{remark}
\begin{proof}
    Let 
    \[Q = \sum_{0\le k_1 < \ldots < k_s \le N} \; \sum_{l_1, \ldots, l_s \in \{1, \ldots, p-1\}} C^{l_1, \ldots, l_s }_{k_1, \ldots, k_s} R^{l_1}_{k_1}(x) \cdot \ldots \cdot R^{l_s}_{k_s}(x) = 
     \sum\limits_{s=1}^{d} Q^{(s)},\] where $Q^{(s)}$ is a polynomial with respect to the $s$-chaos system. Then, by Lemma \ref{lemma-2},
    \begin{equation} \label{lemma2}
    \|Q^{(s)}\|_{\infty} \le \|Q\|_{\infty}.
    \end{equation}
   By Theorem \ref{mainS}, we can estimate the right-hand side from below \eqref{lemma2}
    \begin{equation} \label{lemma2th2}
   \bigl\|(C_{k_1, \ldots, k_s}^{l_{1}, \ldots, l_{s}})\bigl\|_{\frac{2d}{d+1}} \le \bigl\|(C_{k_1, \ldots, k_s}^{l_{1}, \ldots, l_{s}})\bigl\|_{\frac{2s}{s+1}} \le C\|Q\|_{\infty}.
   \end{equation}
   Summing over $s$ in inequality \eqref{lemma2th2}, we have
    \begin{equation} 
   \sum_{s=1}^{d}\sum_{0<k_1< \ldots< k_s\ \le N} \sum_{l_1, \ldots, l_s \in  \{1, \ldots, p-1\}}  \bigl|C_{k_1, \ldots, k_s}^{l_{1}, \ldots, l_{s}}\bigl|^{\frac{2d}{d+1}} \le Cd\|Q\|_{\infty}^{\frac{2d}{d+1}}.
   \end{equation}
\end{proof}


\begin{thebibliography}{99}
\bibitem{SidonTrig}
S.~Sidon, "Verallgemeinerung eines Satzes über die absolute Konvergenz von Fourierreihen mit Lücken", Math. Ann., 97:1 (1927), 675--676.

\bibitem{Sidon}
S.~Sidon, "Uber orthogonale Entwicklungen", Acta sci. math. Szeged., 10 (1943), 206--253.

\bibitem{KasTem98}
B.~S.~Kashin, V.~N.~Temlyakov, "On a certain norm and related applications", Math. Notes, 64:4 (1998), 551–554.

\bibitem{Rad22}
A.~O.~Radomskii, "Sidon-Type Inequalities and the Space of Quasi-continuous Functions", Proc. Steklov Inst. Math., 319 (2022), 253–264.

\bibitem{Ast-book}
S.~V.~Astashkin, "The Rademacher System in Function Spaces", Fizmatlit, Moscow, 201, [In Russian].
\bibitem{Kashin-Saakyn}
B.~S.~Kashin, A.~A.~Saakyan, "Orthogonal Series", AFC, Moscow, 1999, [In Russian].

\bibitem{Blei}
R.~Blei, "Analysis in integer and fractional dimensions", Cambridge University Press, Cambridge, 2001.

\bibitem{KazPlo25}
A.D. Kazakova, M.G. Plotnikov, “On Lacunarity and Uniqueness for 
$p$-adic Analogs of Rademacher Chaos”, Sib. Math. J., 66, 1184–1194 (2025).
\end{thebibliography}
\end{document}